\newtheorem{theorem}{Theorem}
\newtheorem{proposition}[theorem]{Proposition}
\newcommand{\mc}{\mathcal}
\newcommand{\ds}{\displaystyle}
\begin{document}

\sloppy
\title{Implicit Coordination in Two-Agent Team Problems; Application to Distributed Power Allocation}
\author{
   \IEEEauthorblockN{Benjamin Larrousse, Achal Agrawal, and Samson Lasaulce}
   \IEEEauthorblockA{L2S (CNRS -- Sup\'{e}lec -- Univ. Paris Sud 11),~91192 Gif-sur-Yvette, France\\
     Email: \{larrousse, agrawal, lasaulce\}@lss.supelec.fr}
}
\maketitle

\begin{abstract}
The central result of this paper is the analysis of an optimization problem which allows one to assess the limiting performance  of a team of two agents who coordinate their actions. One agent is fully informed about the past and future realizations of a random state which affects the common payoff of the agents whereas the other agent has no knowledge about the state. The informed agent can exchange his knowledge with the other agent only through his actions. This result is applied to the problem of distributed power allocation in a two-transmitter $M-$band interference channel, $M\geq 1$, in which the transmitters (who are the agents) want to maximize the sum-rate under the single-user decoding assumption at the two receivers; in such a new setting, the random state is given by the global channel state and the sequence of power vectors used by the informed transmitter is a code which conveys information about the channel to the other transmitter.
\end{abstract}

\section{Introduction and case study of interest}
\label{sec:intro}

Consider two agents or decision-makers who interact over a time period composed of a large number of stages or time-slots. At each stage $t \in \mathbb{N}$, agent $i\in\{1,2\}$ chooses an action $x_i \in \mathcal{X}_i$, $|\mathcal{X}_i| < +\infty$. The resulting (instantaneous) agents' common payoff is $u(x_0, x_1, x_2)$ where $x_0 \in \mathcal{X}_0$, $|\mathcal{X}_0| < +\infty$, is the realization of a random state for the considered stage, and $w$ is a real valued function. This random state is assumed to be an i.i.d. random process. To assess the (theoretical) limiting achievable coordination performance, it is assumed that one agent, agent 1, knows beforehand and perfectly all the realizations of the random state. Note that, in practice, the sole knowledge of the next realization of the random state is already very useful, just as in conventional power control problems. On the other hand, agent 2 does not know the state at all and can only be informed about it by observing the actions of agent 1. The performance analysis of this problem leads to deriving an information-theoretic constraint. The case of perfect observation is treated in \cite{Gossner-2006} while the generalization to noisy observations is conducted in \cite{Larrousse-isit2013}; to be precise, both references assume that agent 2 has a strictly causal knowledge of the state but it can be shown that not having any knowledge  about the state's realizations at all induces no limiting performance loss \cite{Khayutin-wcgts2007}. Reference \cite{Larrousse-isit2013} also states an optimization problem which essentially amounts to maximizing the long term payoff function under some constraints but this optimization problem is not analyzed. One of the purposes of this paper is precisely to study this general problem in detail. This will allow one to specialize it for the specific problem of power allocation in an important setting of cognitive radio.

The application of interest in this paper corresponds to a scenario which involves two transmitter-receiver pairs whose communications interfere each other. The communication system under consideration is modeled by an $M-$band interference channel, $M\geq1$, as depicted in Fig. \ref{fig:powerallocation}. In contrast with the vast majority of related works on distributed power allocation over multi-band channels (starting with the pioneering work \cite{Yu-2002}), the set of power allocation vectors at a transmitter is assumed to be discrete and finite (namely, $|\mathcal{X}_i| < + \infty$) instead of being continuous. This choice is motivated by many applications (see e.g., \cite{xing-ton-2008}\cite{belmega-asilomar-2010}\cite{rose-commag-2011}\cite{lasaulce-book-2011}) and by well-known results in information theory \cite{Cover:2006:EIT:1146355} which show that the continuous case generally follows from the discrete case by calling quantization arguments. We also assume that channel gains, as defined by Fig. \ref{fig:powerallocation}, lie in discrete sets; this is also well motivated by practical applications such as cellular systems in which quantities such as the channel quality indicator are used. Therefore, for the considered case study, $x_0$ is given by the vector of all channel gains $g_{ij}^m$, $(i,j)\in\{1,2\}^2, m \in \{1,2,...,M\}$, and lies in a finite discrete set (denoted by $\mathcal{X}_0$).

The paper is organized as follows. In Sec. \ref{sec:optimization-problem}, we introduce and solve the general optimization problem of interest. In Sec. \ref{sec:case-study}, we apply the general result of Sec. \ref{sec:optimization-problem} to a special case of payoff function and action sets for the agents. This special case corresponds to the problem of power allocation in a cognitive radio scenario. Sec. \ref{conclusion} concludes the paper.

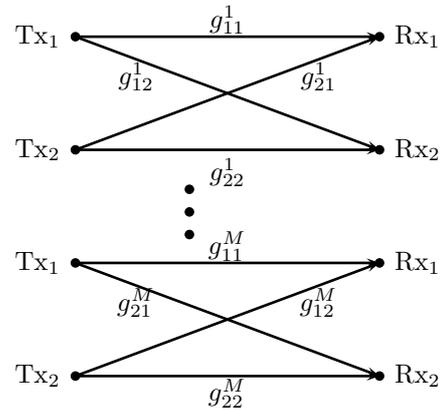
\begin{figure}[!ht]
\begin{center}
\psset{xunit=1cm,yunit=0.75cm}
\begin{pspicture}(0,-0.5)(4,6.5)
\psdot(0,0)
\psdot(0,2)
\psdot(4,0)
\psdot(4,2)
\psdot(0,4)
\psdot(0,6)
\psdot(4,4)
\psdot(4,6)
\psline[linewidth=1pt]{->}(0,0)(4,0)
\psline[linewidth=1pt]{->}(0,2)(4,2)
\psline[linewidth=1pt]{->}(0,4)(4,4)
\psline[linewidth=1pt]{->}(0,6)(4,6)
\psline[linewidth=1pt]{->}(0,0)(4,2)
\psline[linewidth=1pt]{->}(0,2)(4,0)
\psline[linewidth=1pt]{->}(0,4)(4,6)
\psline[linewidth=1pt]{->}(0,6)(4,4)
\rput[r](-0.2,0){$\mathrm{Tx}_2$}
\rput[r](-0.2,2){$\mathrm{Tx}_1$}
\rput[r](-0.2,4){$\mathrm{Tx}_2$}
\rput[r](-0.2,6){$\mathrm{Tx}_1$}
\rput[l](4.2,0){$\mathrm{Rx}_2$}
\rput[l](4.2,2){$\mathrm{Rx}_1$}
\rput[l](4.2,4){$\mathrm{Rx}_2$}
\rput[l](4.2,6){$\mathrm{Rx}_1$}
\rput[u](2,2.3){$g_{11}^M$}
\rput[u](2,6.3){$g_{11}^1$}
\rput[d](2,-0.3){$g_{22}^M$}
\rput[d](2,3.6){$g_{22}^1$}
\rput[u](3.2,1.3){$g_{12}^M$}
\rput[u](0.8,1.3){$g_{21}^M$}
\rput[u](3.2,5.3){$g_{21}^1$}
\rput[u](0.8,5.3){$g_{12}^1$}
\psdots*(1.5,2.5)(1.5,2.9)(1.5,3.3)
\end{pspicture}
\caption{Case study considered in Sec. \ref{sec:case-study}: an interference channel with 2 transmitters (Txs), 2 receivers (Rxs), and $M\geq1$ non-overlapping frequency bands. One feature of the retained model is that both power allocation policies and channel gains $g_{ij}^m$ are assumed to lie in finite discrete sets.}\label{fig:powerallocation}
\end{center}
\end{figure}

\section{Optimization problem analysis}\label{sec:optimization-problem}

Evaluating the limiting performance in terms of average payoff amounts to characterizing the set of possible values for $\mathbb{E}[u]$ under the information structure assumed in this paper. The expected payoff can be written as: \begin{align}
\mathbb{E}[u] &= \sum_{(x_0, x_1, x_2) \in \mathcal{X}_0 \times
\mathcal{X}_1 \times \mathcal{X}_2} q(x_0, x_1, x_2) u(x_0,x_1,x_2) \\
&= \sum_{(x_0, x_1, x_2) \in \mathcal{X}_0 \times
\mathcal{X}_1 \times \mathcal{X}_2} q_{X_0}(x_0)q(x_1, x_2|x_0) u(x_0,x_1,x_2)
\end{align}
where $q \in \Delta(\mathcal{X}_0 \times \mathcal{X}_1 \times \mathcal{X}_2)$,  $\Delta(\cdot)$ standing for the unit simplex over the set under consideration, $q_{X_0}$ is the marginal law of the random state and is considered fixed. The fact that the two agents can only coordinate through the considered information structure imposes a constraint on the average performance which translates into a constraint on $q$ for the expected payoff \cite{Gossner-2006}\cite{Larrousse-isit2013}:
\begin{equation}\label{eq:info-constraint}
I_{q}(X_0;X_2) - H_{q}(X_1|X_0,X_2) \leq 0
\end{equation}
where, for any two random variables $(X,Y) \in (\mc{X} \times \mc{Y})$ with joint law $q(\cdot,\cdot)$:
\begin{itemize}
\item $H_q(X|Y)$ is the conditional entropy of $X$ given $Y$ defined by:\\
\begin{equation} H_q(X|Y) = - \sum_{x \in \mc{X}} \sum_{y \in \mc{Y}} q(x,y) \log_2 \frac{q(x,y)}{q_Y(y)} \end{equation} 
 where $q_Y(\cdot)$ is obtained by marginalization of the joint distribution $q(\cdot,\cdot)$;\\
 One can note that the entropy of $X$ is simply:
 \begin{equation} H_q(X) = - \sum_{x \in \mc{X}} q_X(x) \log_2 q_X(x) \end{equation} 
 \item $I_q(X;Y)$ denotes the mutual information between $X$ and $Y$, defined by:
 \begin{equation} I_q(X;Y) = - \sum_{x \in \mc{X}} \sum_{y \in \mc{Y}} q(x,y) \log_2 \frac{q(x,y)}{q_X(x)q_Y(y)} \end{equation}
 \end{itemize}
Reference \cite{Larrousse-isit2013} provides a clear interpretation of this constraint. Essentially, the first term can be seen as a rate-distortion term while the second term can be seen as a limitation in terms of communication medium capacity. To state the optimization problem which characterizes the limiting performance in terms of expected payoff, a few notations are in order. We denote the cardinality of the set $\mathcal{X}_i$, $i\in\{0,1,2\}$ as: $|\mathcal{X}_i|=n_i<\infty$. For the sake of simplicity and without loss of generality, we consider $\mathcal{X}_i$ as a set of indices $\mathcal{X}_i= \{1, ..., n_i\}$. Additionally, we introduce the vector of payoffs (associated with the function $u$ defined earlier) $w = (w_1, w_2, ..., w_n) \in \mathbb{R}^n$ with $n = n_0 n_1 n_2$ and assume, without loss of generality, that $\mathrm{Pr}[X_0 = j] = \alpha_j > 0$  for all $j \in \mathcal{X}_0 = \{1,\dots,n_0 \}$, with $\sum_{j=1}^{n_0} \alpha_j = 1$. The indexation of $w$ and therefore the vector $q=(q_1,q_2,...,q_n)$ is chosen according to a lexicographic order. This is illustrated through Tab. \ref{Tab:indexing}. This choice simplifies the analysis of the optimization problem which is stated next.

\begin{table}[h]
\begin{center}
\begin{tabular}{|l||c|c|r|}
\hline
Index (i) & $X_0$ & $X_1$ & $X_2$\\
\specialrule{.3em}{.2em}{.2em}
1 & 1&1&1\\
2 & 1 & 1 & 2\\
\vdots & \vdots & \vdots & \vdots \\
$n_2$ & 1 & 1 & $n_2$\\
\hline
$n_2 + 1$ & 1 & 2 & 1\\
\vdots & \vdots & \vdots & \vdots \\
$2 n_2$ & 1 & 2 & $n_2$\\
\hline
\vdots & \vdots & \vdots & \vdots \\
\hline
$n_2(n_1 - 1) + 1$ & 1 & $n_1$ & 1\\
\vdots & \vdots & \vdots & \vdots \\
$n_1 n_2$ & 1 & $n_1$ &$n_2$\\
\specialrule{.3em}{.2em}{.2em}
\vdots & \vdots & \vdots & \vdots \\
\vdots & \vdots & \vdots & \vdots \\
\specialrule{.3em}{.2em}{.2em}
$n_1 n_2 (n_0 - 1) + 1$ & $n_0$ & 1 & 1\\
\vdots & \vdots & \vdots & \vdots \\
$n_0 n_1 n_2$ & $n_0$ & $n_1$ & $n_2$ \\
\specialrule{.3em}{.2em}{.2em}
\end{tabular}
\caption{Chosen indexation for the payoff vector $w$ and distribution vector $q$. Bold lines delineate blocks of size $n_1 n_2$ and each block corresponds to a given value of the random state $X_0$.}\label{Tab:indexing}
\end{center}
\end{table}

The Information Constraint \eqref{eq:info-constraint} can be re-written as:
\begin{align}
ic(q) &\stackrel{\vartriangle}{=} I_{q}(X_0;X_2) - H_{q}(X_1|X_0,X_2) \\
&= H_{q}(X_0) + H_{q}(X_2) - H_{q}(X_0,X_1,X_2) 
\end{align}
With our notation, we have:
\begin{align}
H_{q}(X_0) = - \displaystyle{\sum_{i=1}^{n_0}} \left[ (\displaystyle{\sum_{j=1+(i-1)n_1n_2}^{in_1n_2}} q_j)\log_2(\displaystyle{\sum_{j=1+(i-1)n_1n_2}^{i n_1n_2}} q_j) \right] \label{eq:HX0q}
\end{align}
\begin{align}
H_{q}(X_2) = - \displaystyle{\sum_{i=1}^{n_2}} \left[ (\displaystyle{\sum_{j=0}^{n_0n_1-1}} q_{i+jn_2})\log_2(\displaystyle{\sum_{j=0}^{n_0n_1-1}} q_{i+jn_2}) \right] \label{eq:HX2q}
\end{align}
and
\begin{align}
- H_{q}(X_0,X_1,X_2) = \displaystyle{\sum_{i=1}^{n_0n_1n_2}} q_i\log_2q_i \label{eq:HX0X1X2q}
\end{align}

Thus, the optimization problem of interest consists of finding the best joint distribution(s) $q$ (i.e., the best correlation between the agent's actions and the random state) and is as follows:
\begin{equation}
\begin{array}{cl}
\min & -\mathbb{E}_{q}[w]  =  - \displaystyle{\sum_{i=1}^{n_0n_1n_2}} q_i w_i \\
\text{s.t.} & \displaystyle{-1 + \sum_{i=1}^{n_0n_1n_2}} q_i = 0 \\
  & \displaystyle{ - \alpha_i+\sum_{j=1+(i-1)n_1n_2}^{in_1n_2}} q_j   =  0, \qquad \forall i \in \{1,\dots,n_0\} \\
    &     -q_i \leq  0,\phantom{=========} \forall i \in \{ 1,2,\dots,n_0n_1n_2\} \\
 & ic(q) \leq 0
\end{array} \label{Optpb}
\end{equation}

The first and third constraints imposes that $q$ has to be a probability  distribution. The second constraint imposes that the marginal of $q$ with respect to $x_1$ and $x_2$ has to coincide with the distribution of the random state which is fixed. The fourth constraint is the information-theoretic constraint \eqref{eq:info-constraint}.

To solve the optimization problem \eqref{Optpb} we will apply the Karush Kuhn Tucker (KKT) necessary conditions for optimality \cite{boyd-book-2004}. For this purpose, we first verify that strong duality holds. This can be done e.g., by proving that Slater's constraint qualification conditions are met. Namely, there exists a strictly feasible point for (\ref{Optpb}) and that (\ref{Optpb}) is a convex problem. First, by specializing Lemma 1 in \cite{Larrousse-isit2013} in the case of perfect observation, we know that (\ref{eq:info-constraint}) defines a convex set. Since the cost function and the other constraints of the problem are affine, the problem is then convex; as a consequence. KKT conditions are also sufficient for optimality. The existence of a feasible point is stated in the next proposition.

\begin{proposition}
There exists a strictly feasible distribution $q^+ \in \Delta(\mathcal{X}_0 \times \mathcal{X}_1 \times \mathcal{X}_2)$ for the optimization problem \eqref{Optpb}.
\end{proposition}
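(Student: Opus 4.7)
The natural candidate to exhibit is the fully independent product distribution
\begin{equation}
q^+(x_0, x_1, x_2) = \alpha_{x_0} \cdot \frac{1}{n_1} \cdot \frac{1}{n_2},
\end{equation}
i.e., one in which agent~1 and agent~2 each play uniformly at random and independently of the state. My plan is simply to check each constraint of problem \eqref{Optpb} and verify that the three inequality constraints (positivity and $ic(q)\leq 0$) hold strictly while the two affine equality constraints are met exactly.

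The first two steps are routine bookkeeping. Summing $q^+$ over all $n_0 n_1 n_2$ indices yields $\sum_{x_0}\alpha_{x_0}\cdot 1 \cdot 1 = 1$, so the normalization constraint is satisfied. For any fixed $i\in\mathcal{X}_0$, summing $q^+$ over the block of $n_1 n_2$ indices corresponding to $x_0=i$ in Table~\ref{Tab:indexing} gives $\alpha_i \cdot n_1 \cdot \tfrac{1}{n_1} \cdot n_2 \cdot \tfrac{1}{n_2} = \alpha_i$, matching the marginal constraint on $X_0$. Since $\alpha_i > 0$ for every $i$ by assumption, every component of $q^+$ is strictly positive, so the $n_0 n_1 n_2$ positivity constraints hold strictly.

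The essential step is to show that the information constraint is satisfied with strict inequality. Under $q^+$ the random variables $X_0$, $X_1$, $X_2$ are mutually independent, hence $I_{q^+}(X_0;X_2) = 0$, and
\begin{equation}
H_{q^+}(X_1 \mid X_0, X_2) \;=\; H_{q^+}(X_1) \;=\; \log_2 n_1.
\end{equation}
Therefore $ic(q^+) = 0 - \log_2 n_1 = -\log_2 n_1$, which is strictly negative as soon as $n_1 \geq 2$.

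The only genuine obstacle is the degenerate case $n_1 = 1$, in which agent~1 has a unique available action, no information can possibly be conveyed to agent~2, and the optimization problem collapses to computing $\mathbb{E}_{q_{X_0}}[w]$ against the fixed marginal; this case is outside the scope of the coordination problem and can be excluded without loss of generality, so henceforth $n_1 \geq 2$ is implicitly assumed. Under that mild non-triviality assumption, $q^+$ is a strictly feasible point, which establishes the proposition and, together with the convexity argument recalled just above, yields Slater's qualification and hence strong duality for \eqref{Optpb}.
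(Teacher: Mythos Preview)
Your proof is correct and follows essentially the same route as the paper: both exhibit a fully independent product distribution with full-support marginals, use independence to kill the mutual-information term, and conclude from $H_{q^+}(X_1)>0$. Your version is actually a bit tidier, since by fixing the uniform marginals you compute $ic(q^+)=-\log_2 n_1$ explicitly and make transparent the implicit non-degeneracy assumption $n_1\geq 2$ that the paper's argument also needs but does not spell out.
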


\begin{proof}
First, choose a triplet of random variables $(X_0, X_1,X_2)$ which are independent. That is, we consider a joint distribution $q^+$ which is of the form $q^+(x_0,x_1,x_2) = q_{X_0}^+(x_0)q_{X_1}^+(x_1)q_{X_2}^+(x_2)$. Second, one can always impose a full support condition to the marginals $q_{X_1}^+$ and $q_{X_2}^+$ (i.e., $\forall x_i, q_{X_i}^+(x_i) >0$); $q_{X_0}^+ \equiv q_{X_0}$ has a full support by assumption. Therefore, for the distribution $q^+(x_0,x_1,x_2)$ to be strictly feasible, it remains to be checked that the information-theoretic constraint is active. And this is indeed the case since:
\begin{align}
I_{q}(X_0;X_2) - H_{q}(X_1|X_0,X_2) &= 0 - H_{q}(X_1|X_0,X_2) \label{Indep1} \\
&= - H_{q}(X_1) \label{Indep2}\\
&< 0 \label{positivness}
\end{align}
where: \eqref{Indep1} and \eqref{Indep2} come from the independence hypothesis between $X_0$, $X_1$, and $X_2$; \eqref{positivness} comes from the positiveness of  the entropy and the fact that every $q^+(x_0,x_1,x_2)$ (and thus every $q_{X_1}^+(x_1)$) is strictly positive.
\end{proof}

Following the previous considerations, KKT conditions can be applied. The Lagrangian function can be written as:
\begin{align}
 &\mathcal{L}(q,\mu, \mu_0, \lambda, \lambda_{\mathrm{IC}}) = - \sum_{i=1}^{n_0n_1n_2} (w_i q_i + \lambda_i q_i) \nonumber \\
&+ \mu_0\left[\sum_{i=1}^{n_0n_1n_2} q_i-1\right] + \sum_{i=1}^{n_0} \mu_{i}\left[\sum_{j=1+(i-1)n_1n_2}^{i n_1 n_2} q_j - \alpha_i \right] \nonumber \\
&+ \lambda_{\mathrm{IC}}\cdot ic(q)
\end{align}
where $\lambda = (\lambda_1, ..., \lambda_{n_0 n_1 n_2})$, $\mu = (\mu_1, ..., \mu_{n_0})$, and IC stands for information-theoretic constraint. We have the following partial derivatives for the information constraint:
\begin{align}
&\frac{\partial ic}{\partial q_i}(q) = \Bigg[-\sum_{k=1}^{n_0} \mathbbm{1}_{\{1+(k-1)n_1n_2 \leq i \leq (k)n_1n_2\} } \nonumber \\
& \phantom{===========} * \log_2(\sum_{j=1+(k-1)n_1n_2}^{k n_1n_2} q_j)\nonumber \\
& -\sum_{k=1}^{n_2} \mathbbm{1}_{\{ i \in \{k,k+n_2,\dots,k+(n_0n_1-1)n_2 \} \} } \log_2(\sum_{j=0}^{n_0n_1-1} q_{k+jn_2}) \nonumber \\
&+ \log_2 q_i - 1 \Bigg] \qquad \forall i \in \{ 1,2,\dots,n_0n_1n_2\}
\end{align}

Other terms of the Lagrangian are easy to derive. KKT conditions follow:
\begin{align}
&\frac{\partial \mathcal{L}}{\partial q_i} = -w_i - \lambda_i + \mu_0 + \sum_{j=1}^{n_0} \mu_{j} \mathbbm{1}_{ \{1+n_1 n_2(j-1) \leq i \leq j n_1 n_2\} } \nonumber \\
& + \lambda_{\mathrm{IC}} \Bigg[\frac{\partial ic}{\partial q_i}(q)\Bigg] = 0 \qquad \forall \; i \in\{1,2,\dots,n_0n_1n_2\} \label{partialLAGg} \\
&\lambda_i \geq 0 \qquad \forall \; i \in\{1,2,\dots,n_0n_1n_2\} \\
&\lambda_{\mathrm{IC}} \geq 0 \\
&\lambda_i q_i = 0 \qquad \forall \; i \in\{1,2,\dots,n_0n_1n_2\} \\
&\lambda_{\mathrm{IC}} ic(q) = 0
\end{align}
where $\mathbbm{1}_{\{\cdot\}}$ is the indicator function and $i(q)$ is the inequality constraint function associated with the information-theoretic constraint (\ref{eq:info-constraint}). By inspecting the KKT conditions, the following proposition can be proved.

\begin{proposition} If there exists a permutation such that the payoff vector $w$ can be strictly ordered, then any optimal solution of (\ref{Optpb}) is such that the information-theoretic constraint is active i.e.,  $\lambda_{\mathrm{IC}} > 0$.
\end{proposition}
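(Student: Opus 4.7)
The plan is to reason by contradiction using the KKT system, showing that $\lambda_{\mathrm{IC}} = 0$ together with the strict-ordering hypothesis on $w$ forces a degenerate pure-strategy optimum that cannot actually be optimal.

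First, inserting $\lambda_{\mathrm{IC}} = 0$ into the stationarity equation \eqref{partialLAGg} kills the $ic$-gradient contribution (the only nonlinear piece) and leaves the affine relation $w_i + \lambda_i = \mu_0 + \mu_{j(i)}$, where $j(i) \in \{1,\dots,n_0\}$ is the block that contains index $i$, i.e.\ the value of $x_0$ associated with $i$. Complementary slackness $\lambda_i q_i^* = 0$ forces $\lambda_i = 0$ whenever $q_i^* > 0$, so all positive-probability indices inside one block share the common payoff value $\mu_0 + \mu_{j(i)}$. The strict-ordering assumption on $w$ disallows such a coincidence, so each block admits at most one positive-probability index. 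Combined with the marginal constraint $\sum_{i\in\mathrm{block}\,j} q_i^* = \alpha_j > 0$, exactly one index per block receives the full mass $\alpha_j$, and so $(X_1, X_2)$ is a deterministic function of $X_0$ under $q^*$.

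Second, substituting this pure form in the information functional yields $H_{q^*}(X_1 \mid X_0, X_2) = 0$ and $I_{q^*}(X_0;X_2) = H_{q^*}(X_2)$, whence $ic(q^*) = H_{q^*}(X_2) \ge 0$. Feasibility of $q^*$ then forces $H_{q^*}(X_2) = 0$, i.e.\ the optimal pure assignment $x_0 \mapsto (x_1^*(x_0), x_2^*(x_0))$ must use a single $x_2$-value across every $x_0 \in \mathcal{X}_0$.

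The main obstacle is to close the argument by ruling out this collapsed configuration as a genuine optimum. The natural route is to build a feasible improvement on $q^*$: mixing it with the strictly feasible interior distribution $q^+$ produced by the previous proposition yields, by convexity of $ic$ and $ic(q^+) < 0$, a one-parameter family $q^\varepsilon := (1-\varepsilon) q^* + \varepsilon q^+$ that remains feasible for every $\varepsilon \in (0,1]$; one then tunes $q^+$ so that the induced first-order variation of the linear objective $\mathbb{E}_q[w]$ is strictly positive, exploiting precisely the within-block strict inequalities between the $w_i$'s provided by the ordering hypothesis. Establishing this strict monotonicity cleanly is the delicate step, but once in hand it contradicts the optimality of $q^*$ and forces $\lambda_{\mathrm{IC}} > 0$ at every optimum.
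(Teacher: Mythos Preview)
Your first two paragraphs reproduce the paper's argument: with $\lambda_{\mathrm{IC}}=0$, stationarity plus strict ordering force exactly one positive component per block, and then feasibility of the information constraint forces $H_{q^*}(X_2)=0$, so $X_2$ is deterministic under $q^*$.

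The genuine gap is in your third paragraph. Your own stationarity computation yields more than you exploit: from $w_i+\lambda_i=\mu_0+\mu_{j(i)}$ and $\lambda_i\ge 0$ one gets $w_i\le \mu_0+\mu_{j(i)}$ for every $i$ in block $j$, with equality precisely at the single surviving index $i_j^*$. Hence $w_{i_j^*}=\max_{i\in\text{block }j}w_i$, and $\mathbb{E}_{q^*}[w]=\sum_j\alpha_j\max_{i\in\text{block }j}w_i$ is already the maximum of the linear objective over the whole marginal-constrained simplex, \emph{ignoring} the information constraint. Every admissible $q$ therefore satisfies $\mathbb{E}_q[w]\le\mathbb{E}_{q^*}[w]$; in particular no choice of $q^+$ can make the first-order variation of $\mathbb{E}_{q^\varepsilon}[w]$ strictly positive---the within-block strict inequalities you invoke work against you here, not for you, since they pin $q^*$ to the unique blockwise maximisers. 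The perturbation route is thus blocked, not merely ``delicate''. The paper closes this residual case differently and without any perturbation: it simply records that $ic(q^*)=0$, i.e., the constraint holds with equality, and treats this as the desired contradiction to $\lambda_{\mathrm{IC}}=0$ (the proposition itself identifies ``active'' with $\lambda_{\mathrm{IC}}>0$). One can fairly object that equality of the constraint does not by itself exclude a valid multiplier $\lambda_{\mathrm{IC}}=0$, but that looseness sits in the paper's own formulation and is not something your perturbation idea can repair.
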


\begin{proof} We proceed by contradiction. Assume that the payoff vector can be strictly ordered and that the constraint is not active for solutions under consideration, that is, $\lambda_{\mathrm{IC}} = 0$.

First, consider possible solution candidates $q$ which have two or more non-zero components per block of size $n_1 n_2$ which is associated with a given realization $x_0$ of the random state (see Tab. \ref{Tab:indexing}) . Since there exists a pair of distinct indices $(j,k)$ such that $q_j>0$, $q_k>0$, we have that $\lambda_j=0$, $\lambda_k = 0$. This implies that, through the gradient conditions of the KKT conditions, $w_j = w_k$ which contradicts the fact that payoffs are strictly ordered.

Second, consider possible solution candidates $q$ which have only one non-zero component per block associated with $x_0$ (see Tab. \ref{Tab:indexing}). This implies that
$ H_q(X_0, X_1, X_2) = H_q(X_0) = H(X_0)$, which means that $H_q(X_0) + H_q(X_2) > H_q(X_0, X_1, X_2)$, whenever $H_q(X_2) >0$. This means that the constraint is violated and therefore the considered candidates are not feasible. Now, if $H_q(X_2) =0$, we see that the Information constraint is active which contradicts again the starting assumption .
\end{proof}

Proposition 2 is especially useful for wireless communications when the state is given by the overall channel. Due to channel randomness, the most common scenario is that the payoffs associated with the channel realizations are distinct. For this reason, we will assume such a setting in this paper and thus that $\lambda_{\mathrm{IC}} > 0$. If $\lambda_{IC} > 0$, we have the following: \begin{itemize}
    \item We can not have $\lambda_i>0$ for one or more $i \in \{1,2,\dots,n_0n_1n_2\}$. Indeed, if for example $\lambda_i >0$, then $q_i = 0$, which implies $\log_2 (q_i) = - \infty$ and  \eqref{partialLAGg} can not be satisfied.
	\item However, if one of the $q_i$'s equals $0$, and $q_{k}=0$ for all $k$ such that $k[n_2] = i[n_2]$ (where $[x]$ stands for modulo $x$), then the $\lambda_{\mathrm{IC}}$ component equals $\lim_{x \to 0}\frac{x}{n_0n_1x}$ and does not go to $-\infty$. This case cannot be discarded, but it can be said that $X_2$ is deterministic in such a case.
\end{itemize}
Summarizing our analysis, the only possible cases are:\begin{itemize}
 \item $\lambda_{\mathrm{IC}} > 0$, and exactly one $\lambda_i$ for each block (corresponding to a particular state of nature) are non-zeros, and they have to be associated with the same action of $X_2$ ($X_2$ has to be deterministic). In this case there is no communication, and the optimal strategies are trivial. Therefore we shall not be discussing this case henceforth.
 \item The only relevant case is: \begin{align*}
\lambda_i &= 0 \qquad \forall \; i \in \{1,2,\dots,{n_0n_1n_2}\} \\
\lambda_{\mathrm{IC}} &> 0
\end{align*}
\end{itemize}

For the latter case, KKT conditions become:
\begin{align}
&\frac{\partial \mathcal{L}}{\partial q_i} = -w_i + \mu_0 + \left( \sum_{j=1}^{n_0} \mu_{j} \mathbbm{1}_{ \{1+n_1 n_2(j-1) \leq i \leq j n_1 n_2\} } \right) \nonumber \\
& + \lambda_{\mathrm{IC}} \Bigg[\frac{\partial ic}{\partial q_i}(q) \Bigg] = 0 \qquad \forall \; i \in\{1,2,\dots,n_0n_1n_2\} \\
&\lambda_i = 0 \qquad \forall \; i \in\{1,2,\dots,{n_0n_1n_2}\} \\
&\lambda_{\mathrm{IC}} > 0 \\
&i(q) = 0.
\end{align}

Now that we have proved some useful results about the structure of optimal solutions of (\ref{Optpb}), a natural question is whether the optimal solution is unique, which is the purpose of the next proposition.

\begin{proposition} If there exists a permutation such that the payoff vector $w$ can be strictly ordered, the optimization problem (\ref{Optpb}) has a unique solution.
\end{proposition}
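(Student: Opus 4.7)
The plan is to argue by contradiction. Suppose that (\ref{Optpb}) admits two distinct optima $q^*$ and $\tilde q^*$. By linearity of the objective and convexity of the feasible set, the segment $q_\theta = (1-\theta)q^* + \theta\tilde q^*$, $\theta \in [0,1]$, is entirely optimal. By Proposition 2, the information-theoretic constraint is active at every optimum, hence $ic(q_\theta) \equiv 0$ on $[0,1]$, which in particular forces $\frac{d^2}{d\theta^2}ic(q_\theta) \equiv 0$ on this interval.

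Differentiating the entropy expressions (\ref{eq:HX0q})--(\ref{eq:HX0X1X2q}) twice along the segment, and recalling that $H_q(X_0)$ is constant on the feasible set since the $X_0$-marginal is fixed, gives
\[
\frac{d^2}{d\theta^2} ic(q_\theta) = \frac{1}{\ln 2}\Bigg[\sum_{i=1}^{n_0n_1n_2} \frac{d_i^2}{q_{i,\theta}} - \sum_{x_2=1}^{n_2} \frac{(d_{X_2}(x_2))^2}{q_{X_2,\theta}(x_2)}\Bigg],
\]
with $d = \tilde q^* - q^*$ and $d_{X_2}(x_2)$ its $X_2$-marginal. Applying Cauchy--Schwarz inside each $X_2$-slice, this bracket is non-negative and vanishes if and only if $d_i/q_{i,\theta}$ is a function only of $x_2(i)$ (the $X_2$-component of the triple indexed by $i$). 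Imposing this for every $\theta$, the difference must take the form $d_i = f(x_2(i))\,q^*_i$ for some real function $f$ on $\mathcal{X}_2$. In particular, $\tilde q^*_{X_2}(x_2)/q^*_{X_2}(x_2) = 1+f(x_2)$ and the conditional distributions $q^*(x_0,x_1|x_2)$ and $\tilde q^*(x_0,x_1|x_2)$ coincide for every $x_2$.

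Next I would invoke the KKT conditions derived in the excerpt at both optima. The crucial algebraic observation is that the quantity $\log_2 q_i - \log_2 q_{X_2}(x_2(i))$ is unchanged between $q^*$ and $\tilde q^*$ (the factor $\log_2(1+f(x_2))$ cancels); subtracting the two gradient equations therefore leaves
\[
(\lambda_{\mathrm{IC}}^* - \tilde\lambda_{\mathrm{IC}}^*)\big[\log_2 q^*_i - \log_2 q^*_{X_2}(x_2(i))\big] = \Phi(x_0(i)),
\]
a quantity depending only on the block index $x_0(i)$. If the two multipliers differed, $w_i$ would be forced through the KKT equation to depend only on $x_0(i)$, contradicting strict ordering of $w$ whenever $n_1 n_2 \geq 2$; hence $\lambda_{\mathrm{IC}}^* = \tilde\lambda_{\mathrm{IC}}^*$ and $\mu_0^* + \mu_k^* = \tilde\mu_0^* + \tilde\mu_k^*$ for every $k$. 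Combining the $X_0$-marginal equations $\sum_{x_2} f(x_2)\,q^*(x_0,x_2) = 0$ for each $x_0$ with the value-preservation identity $\sum_i d_i w_i = 0$ and the exponential-in-$w$ form $q^*(x_0,x_2) \propto q^*_{X_2}(x_2)\sum_{x_1} 2^{w_i/\lambda_{\mathrm{IC}}^*}$ supplied by KKT, one then concludes $f \equiv 0$, so $q^* = \tilde q^*$.

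The hardest part will be this closing linear-algebra step: verifying that the combined system of $X_0$-marginal constraints and value-preservation constraint on the $n_2$-dimensional vector $f$ has only the trivial solution. Strict ordering of $w$ enters here through the distinctness of the exponentials $2^{w(x_0,x_1,x_2)/\lambda_{\mathrm{IC}}^*}$ across $x_1$ for every fixed $(x_0,x_2)$, which should make the resulting partition functions $\sum_{x_1} 2^{w(x_0,x_1,x_2)/\lambda_{\mathrm{IC}}^*}$ generic enough, when combined with the extra value-preservation equation, to ensure the relevant linear map is injective.
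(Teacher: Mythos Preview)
The paper's route is much shorter. It observes that on the feasible set (where the $X_0$-marginal is fixed) one has $ic(q)=H_q(X_0)-H_q(X_0,X_1\mid X_2)$, shows the second term concave by a direct application of the log-sum inequality, and thereby declares $ic$ strictly convex; Prop.~2 then finishes immediately, since two distinct optima would make the midpoint optimal with $ic$ strictly negative there, contradicting activity of the information constraint. Your second-derivative/Cauchy--Schwarz computation is exactly the infinitesimal form of that same log-sum step, so up to the identification of the equality case the two arguments coincide --- yours is just the unrolled version.

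Your proof, however, has a genuine gap at the closing ``linear-algebra step'' you yourself flag as hardest. Having reduced to $\tilde q^*_i=(1+f(x_2(i)))\,q^*_i$, you try to force $f\equiv 0$ from the $n_0$ marginal equations $\sum_{x_2} f(x_2)\,q^*(x_0,x_2)=0$ plus one value-preservation identity, in the $n_2$ unknowns $f(x_2)$; when $n_2>n_0+1$ this system is underdetermined, and ``the partition functions should be generic enough'' is not an argument. To be fair, the equality case you isolate --- identical conditionals $q(\cdot,\cdot\mid x_2)$ but different $X_2$-marginals --- is precisely the case in which the log-sum inequality is \emph{not} strict, and the paper sweeps it aside with the bare assertion that $q^1\neq q^2$ implies $a_1/b_1\neq a_2/b_2$. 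So you have correctly located a real subtlety that the paper glosses over; you just have not resolved it, and the KKT-algebra sketch as written does not close the gap.
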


\begin{proof} We know, by Prop. 2, that $\lambda_{\mathrm{IC}} >0$ for any optimal solution. It turns out that, if $\lambda_{\mathrm{IC}} >0$, the Lagrangian of (\ref{Optpb}) is a strictly convex function w.r.t. the vector $q$. Indeed, the optimization spaces are compact and convex, and the Lagrangian is the sum of linear functions and a strictly convex function $i(q)$.\\
It remains to show that $ic : q  \mapsto I_{q} (X_0 ; X_2) - H_{q} (X_1 |X_0,X_2)$ is strictly convex over the set of distributions $q \in \Delta(\mathcal{X}_0 \times \mathcal{X}_1 \times \mathcal{X}_2)$ that verify $q_{X_0} := \sum_{(x_1,x_2)} q(x_0,x_1,x_2) = \rho(x_0)$ with $\rho$ fixed.

The first term $I_{q} (X_0 ; X_2)$ is a convex function of $q_{X_2|X_0}$ for fixed $q_{X_0}$. For the second term, let $\lambda_1 \in [0,1]$, $\lambda_2 = 1 - \lambda_1$, $(q^1,q^2) \in  (\Delta(\mathcal{X}_0 \times \mathcal{X}_1 \times \mathcal{X}_2))^2$ and $q=\lambda_1 q^1 + \lambda_2 q^2$. We have that:
\begin{align}
  & H_q(X_1 | X_0, X_2) =  - \sum_{x_0,x_1,x_2} \bigg( \sum_{i=1}^{2} \lambda_i q^i (x_0,x_1,x_2) \bigg). \nonumber \\
  &\phantom{==============} \log \left[ \frac{\sum_{i=1}^2 \lambda_i q^i(x_0,x_1,x_2)}{ \sum_{i=1}^2 \lambda_i q_{X_2}^{i}(x_2)} \right] \\
  &> -\sum_{x_0,x_1,x_2}  \sum_{i=1}^2 \lambda_i q^i(x_0,x_1,x_2) \log \left[ \frac{\lambda_i q^i(x_0,x_1,x_2)}{\lambda_i q_{X_2}^{i}(x_2)} \right] \label{Ineqlogsum}\\
&=  -  \sum_{i=1}^2 \lambda_i \sum_{x_0,x_1,x_2} q^i(x_0,x_1,x_2) \log \left[ \frac{q^i(x_0,x_1,x_2)}{ q_{X_2}^{i}(x_2)} \right] \\
&= \lambda_1 H_{q^1}(X_1 | X_0, X_2) + \lambda_2 H_{q^2}(X_1 |X_0, X_2)
\end{align}
where \eqref{Ineqlogsum} comes from the log sum inequality \cite{Cover:2006:EIT:1146355}, with:
\begin{equation} a_i = \lambda_i q^i(x_0,x_1,x_2)
\end{equation}
and
\begin{equation} b_i = \lambda_i q_{X_2}^{i}(x_2)
\end{equation}
for $i=1,2$ and for all $x_0,x_1,x_2$ such that $q_{X_2}^{i}(x_2)>0$.

The inequality is strict because $\frac{a_1}{b_1} \neq \frac{a_2}{b_2}$, since we have assumed that $q^1$ and $q^2$ distinct.
\end{proof}

The uniqueness property for the optimization problem is particularly useful in practice since it means that any converging numerical procedure to find an optimal solution will lead to the unique global minimum.

\section{Distributed power allocation case study}\label{sec:case-study}

\subsection{Case study description}

We now consider the specific problem of power allocation over $M-$band interference channels with two transmitter-receiver pairs. Transmissions are time-slotted and, on each time-slot, transmitter $i\in \{1,2\}$ has to choose a power allocation vector in the following set of actions:
\begin{align}
\mathcal{P}_i  = \bigg\{ &\frac{{P}_{\max}}{\ell} e_{\ell} : \ell \in\{1, \hdots, M\}, \nonumber\\
& e_{\ell} \in \{0,1\}^{M}, \sum_{i =1}^{M} e_{\ell}(i) = \ell  \bigg\}
\end{align}
where $P_{\max}$ is the the power budget available at a transmitter. Each channel is assumed to lie in a discrete set $\Gamma=\{g_1, ...,g_S\}$, $S\geq 1$, $g_s \geq 0$ for $s\in\{1,...,S\}$. Therefore, if one denotes by $g^m$ the vector of four channel gains corresponding to the band $m \in \{1,...,M\}$, then $g^m \in \Gamma^4$ and the global channel state $g=[g^1,...,g^M]$ lies in $ \mc{G} = \Gamma^{4M}$ whose cardinality is $S^{4M}$. As it is always possible to find a one-to-one mapping between $\mc{P}_i$, $i\in\{1,2\}$, (resp. $\mc{G}$) and $\mc{X}_i$ (resp. $\mc{X}_0$) as defined in Sec. \ref{sec:optimization-problem}, the results derived therein can be applied here. Lastly, for a given time-slot, the instantaneous or the stage payoff function which is common to the transmitters is chosen to be:
\begin{align}
u:\left|
\begin{array}{c}
\mc{G} \times \mc{P}_1 \times \mc{P}_2  \rightarrow \mathbb{R}^+ \phantom{================} \\
(g,p_1,p_2)  \mapsto \ds{\sum_{i=1}^2} \ds{\sum_{m=1}^M} B_m \log_2 \left(1+  \frac{g_{ii}^m p_{i}^m}{\sigma^2 + g_{-ii}^m p_{-i}^m} \right)
\end{array}
\right.
\end{align}
where $p_i$ is the power allocation chosen by transmitter $i$ on the current time-slot whose channel state is $g$, $\sigma^2$ is the noise variance, $B_m$ is the bandwidth of band $m$, $p_i^m$ the power transmitter $i$ allocates to band $i$,$-i$ stands for the  transmitter other than $i$.

\subsection{Simulation setup}

In this section, specific values for the parameters which are defined in the preceding section are chosen, in particular to make the interpretations relatively easy. We assume $M=2$ bands and therefore that the transmitters have three actions: $\mathcal{P}_i= P_{\max}\left\{ (0,1),(1,0),(\frac{1}{2},\frac{1}{2}) \right\}$ for $i\in \{1,2\}$. As \cite{mochaourab-valuetools-2009} we assume the first band to be protected ($g_{12}^1=g_{21}^1=0$) whereas the second band corresponds to a general single-band interference channel. The other channel gains are chosen as follows:
\begin{equation}
g_{ii}^1 \in \{ 0.1, 1.9\}, \quad i\in\{1,2\}
\end{equation}
\begin{equation}
g_{ij}^2 \in \{ 0.15, 1.85\}, \quad (i,j)\in\{1,2\}.
\end{equation}
We suppose that each $g_{ij}^k$, $k=1,2$ is i.i.d. and Bernouilli distributed $g_{ij}^k \sim \mathcal{B}(\pi_{ij}^k)$ with $P(g_{ii}^1 = 0.1) = \pi_{ii}^1$ and  $P(g_{ij}^2 = 0.15) = \pi_{ij}^2$. We define SNR[dB]$= 10 \log_{10}\left( \frac{P_{\max}}{\sigma^2} \right)$, and we consider two regimes for the second band: a high interference regime (HIR), defined by $(\pi_{11}^2,\pi_{12}^2,\pi_{21}^2,\pi_{22}^2) = (0.5,0.1,0.1,0.5)$ and a low interference regime (LIR) defined by $(\pi_{11}^2,\pi_{12}^2,\pi_{21}^2,\pi_{22}^2) = (0.5,0.9,0.9,0.5)$. For the first band, we take $\pi_{11}^1=\pi_{22}^1=0.2$. One can see that our choice of parameters indeed define a high interference regime: $ P((g_{ij}^2|i\neq j) = 1.85) = 1-0.1 = 0.9$, thus creating high interference due a high probability for a greater value of $(g_{ij}^2|i\neq j)$. The similar intuition holds for low interference regime. Three power allocation policies will be considered:

\begin{itemize}
	\item The costless communication case, where both transmitters knows the state beforehand and can reach the maximum payoff at every stage;
	\item The (information-constrained) optimal policy (OP) corresponding to the optimal solution of the optimization problem \eqref{Optpb};
	\item The blind policy (BP), where transmitters don't know anything about channel gains and always choose to put half of their power in each band: $p_1=p_2= P_{\max} ( \frac{1}{2} , \frac{1}{2})$ at every stage.
\end{itemize}

Fig. \ref{fig:refhalfhalf} represents the gain allowed by asymmetric coordination w.r.t. the case where the transmitters always use the uniform power allocation policy (BP). This gain can be as high as $40\%$ for the considered range of SNR. It is seen that the gains are particularly significant when the interference is high (the two top red curves) and in the low and high SNR regimes (red and blue curves on the left and right sides). The first observation translates the intuition that the higher the interference level the stronger is the gain brought by coordination. The second can be understood as follows: In the high SNR regime, the transmission rate over the non-protected band is interference limited and bounded and it is better to allocate the power to the protected band which allows an arbitrarily large rate as the SNR grows large. This explains why allocating uniformly the power becomes more and more suboptimal as the SNR increases. In the low SNR regime, essentially the interference becomes negligible and the best power allocation policies roughly correspond to water-filling over the available channels. At low SNR, the best water-filling policy is to use the best band and not to allocate power uniformly, which explains the gap between the coordinated policies and uniform power allocation. Our explanations are sustained by Fig. \ref{fig:Marg-HIR}, which shows the probability that a transmitter uses a given power allocation vector. For instance, at low SNR, the dominant actions for both transmitters is to use the protected band. It can be noticed that transmitter 1 has also to convey information to transmitter 2 (i.e., ensuring that the entropy of $X_1$ is not too small), which is why he cannot use the protected band as often as transmitter 2. One also notices in Fig.  \ref{fig:Marg-HIR} that the probability of the action $(0,1)$ (using the shared band) is zero from lower SNR values for transmitter 2 than for transmitter 1. This can be explained by the fact that the higher the power available for both transmitters, the higher the interference in the non-protected band. However, transmitter 1 still chooses to play this action as it has knowledge of channel gains and can use the interference band to improve the common utility. The same argument stands for Fig. \ref{fig:Marg-beta}. At last, Fig. \ref{fig:Marg-beta} shows the influence of the bandwidths on the power allocation policies. Not surprisingly, the higher the bandwidth of the protected band, the more often it is used, and conversely for the non-protected band. Concerning the uniform policy, it is seen that transmitter uses it more frequently, although channel conditions are similar, which translates again the need for transmitter 1 to convey information.

\begin{figure}[h!]
\centering
\includegraphics[width=0.52\textwidth]{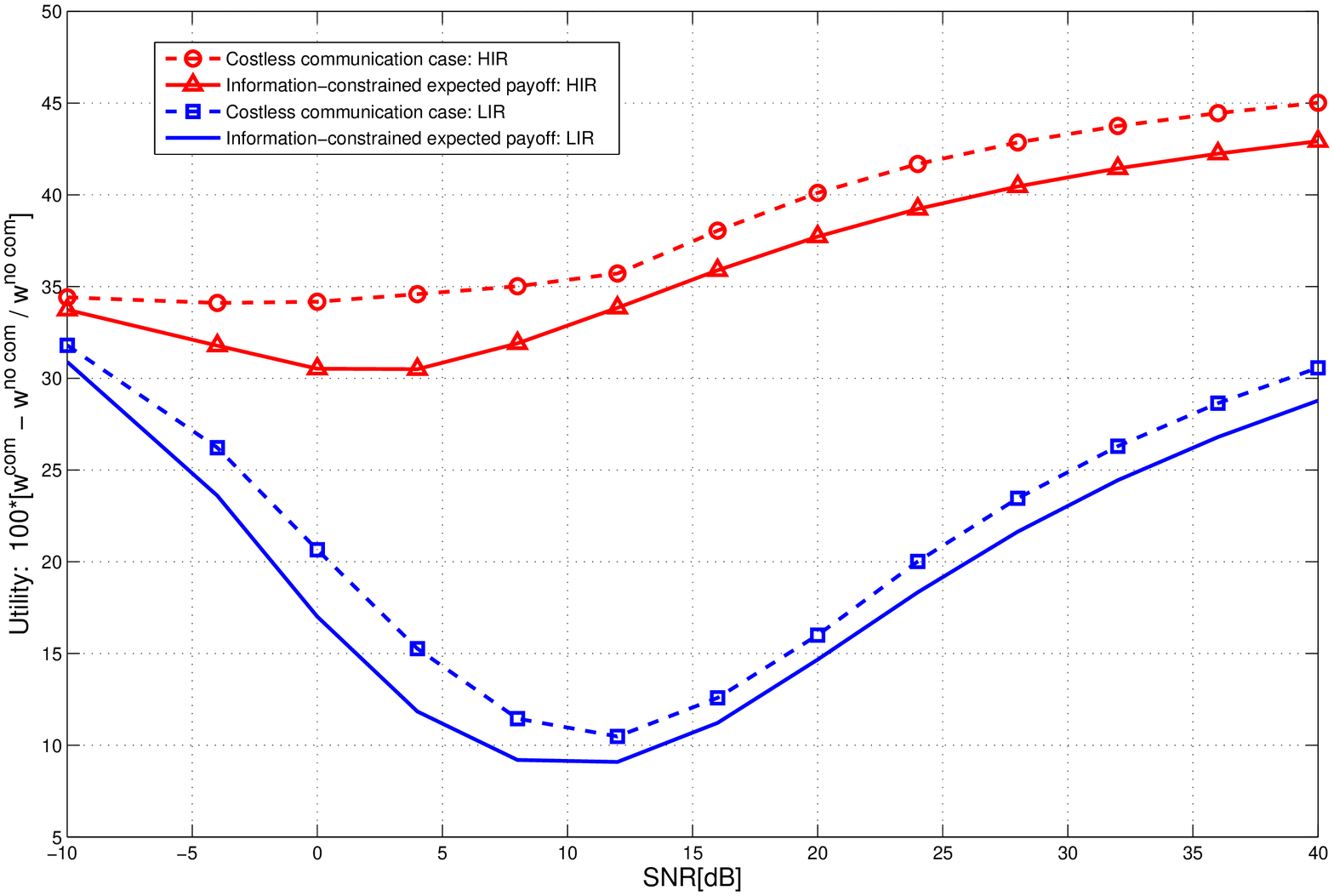}
\caption{Relative gain in terms of expected payoff (``OP/BP - 1'' in [\%]) vs SNR[dB] obtained with the Optimal policy (OP) (with and without communication cost) when the reference policy is to put half of the power on each band (BP). Red curves correspond to the HIR, and blue curve to the LIR. $B_1=B_2=10$MHz.}
\label{fig:refhalfhalf}
\end{figure}

\begin{figure}[htbp]
\centering
 \includegraphics[width=0.52\textwidth]{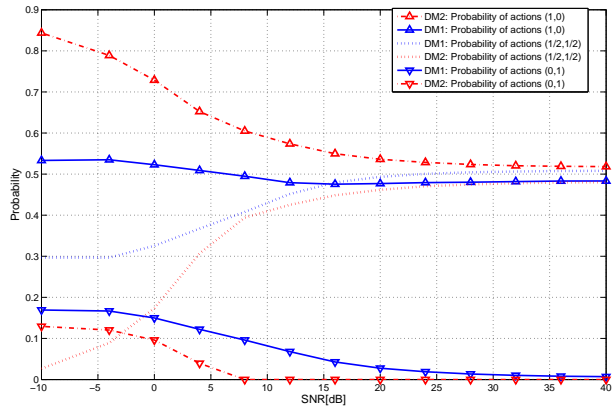}
\caption{Marginal probability distributions $q_{X_1}(\cdot)$ $q_{X_2}(\cdot)$ of transmitter 1 and transmitter 2 for the optimal policy vs SNR[dB] for the High Interference Regime. $B_1=B_2=10$MHz.}
\label{fig:Marg-HIR}
\end{figure}

\begin{figure}[htbp]
\centering
 \includegraphics[width=0.52\textwidth]{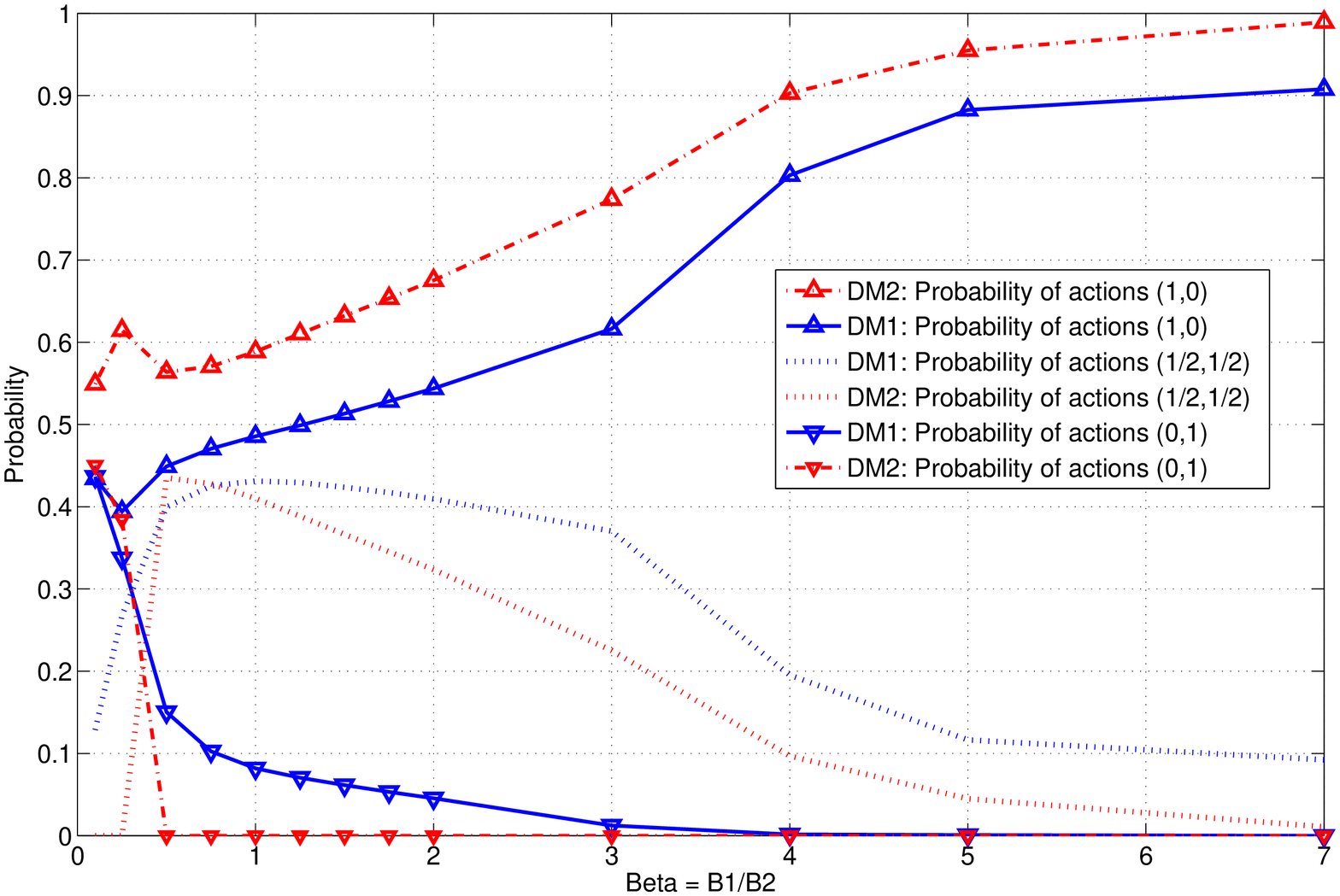}
\caption{Marginal probability distributions $q_{X_1}(\cdot)$ $q_{X_2}(\cdot)$ of transmitter 1 and transmitter 2 for the optimal policy vs $\beta$ for the optimal policy for the High Interference Regime, where $\beta= \frac{B_1}{B_2}$. For this simulation, we chose SNR=10[dB].}
\label{fig:Marg-beta}
\end{figure}

\section{Conclusion}\label{conclusion}

This work clearly illustrates the potential benefit of the proposed approach, by embedding coordination information into the power allocation levels, relative gains as high as $40\%$ can be obtained w.r.t. the uniform power allocation policies. In this work, the embedded information is a distorted version of the channel state but the proposed approach is much more general: information about the state of queue, a battery, etc, could be considered; other types of policies might be considered to encode information e.g., channel selection policies, transmit power levels. The study of generalized versions of this problem, such as the case of imperfect monitoring, or continuous power allocation, will be provided in future works. This work however indicates the high potential of such an approach for team optimization problems. More importantly, it gives an optimization framework to analyze performance limits for problems with implicit communication.

\section*{Acknowledgment}
Auhtors of this paper and particularly A. Agrawal would like to thank project LIMICOS - ANR-12-BS03-0005 for financing the project.

\bibliographystyle{IEEEtran}
\bibliography{biblio-wiopt2}

\begin{thebibliography}{10}
\providecommand{\url}[1]{#1}
\csname url@samestyle\endcsname
\providecommand{\newblock}{\relax}
\providecommand{\bibinfo}[2]{#2}
\providecommand{\BIBentrySTDinterwordspacing}{\spaceskip=0pt\relax}
\providecommand{\BIBentryALTinterwordstretchfactor}{4}
\providecommand{\BIBentryALTinterwordspacing}{\spaceskip=\fontdimen2\font plus
\BIBentryALTinterwordstretchfactor\fontdimen3\font minus
  \fontdimen4\font\relax}
\providecommand{\BIBforeignlanguage}[2]{{%
\expandafter\ifx\csname l@#1\endcsname\relax
\typeout{** WARNING: IEEEtran.bst: No hyphenation pattern has been}%
\typeout{** loaded for the language `#1'. Using the pattern for}%
\typeout{** the default language instead.}%
\else
\language=\csname l@#1\endcsname
\fi
#2}}
\providecommand{\BIBdecl}{\relax}
\BIBdecl

\bibitem{Gossner-2006}
O.~Gossner, P.~Hernandez, and A.~Neyman, ``Optimal use of communication
  resources,'' \emph{Econometrica}, vol.~74, no.~6, pp. 1603--1636, Nov. 2006.

\bibitem{Larrousse-isit2013}
B.~Larrousse and S.~E. Lasaulce, ``Coded power control: Performance analysis,''
  in \emph{2013 IEEE International Symposium on Information Theory
  (ISIT'2013)}, Jul. 2013.

\bibitem{Khayutin-wcgts2007}
I.~Khayutin, ``Communication with unobservable constraints,'' in \emph{Research
  report}, Dec. 2007.

\bibitem{Yu-2002}
W.~Yu, G.~Ginis, and J.~M. Cioffi, ``Distributed multiuser power control for
  digital subscriber lines,'' \emph{IEEE Journal on selected areas in
  Communications}, vol.~20, no.~5, pp. 1105 -- 1115, June 2002.

\bibitem{xing-ton-2008}
Y.~Xing and R.~Chandramouli, ``Stochastic learning solution for distributed
  discrete power control game in wireless data networks,'' \emph{IEEE/ACM
  Trans. Networking}, vol.~16, no.~4, pp. 932--944, Aug. 2008.

\bibitem{belmega-asilomar-2010}
E.~V. Belmega, H.~Tembine, and S.~Lasaulce, ``Learning to precode in outage
  minimization games over {MIMO} interference channels,'' in \emph{IEEE Proc.
  of the Asilomar Conference on Signals, Systems, and Computer}, Nov. 2010.

\bibitem{rose-commag-2011}
L.~Rose, S.~Lasaulce, S.~M. Perlaza, and M.~Debbah, ``Learning equilibria with
  partial information in wireless networks,'' \emph{IEEE Communication
  Magazine}, vol.~49, no.~8, pp. 136--142, Aug. 2011, special Issue on Game
  Theory for Wireless Networks.

\bibitem{lasaulce-book-2011}
S.~Lasaulce and H.~Tembine, \emph{Game Theory and Learning for Wireless
  Networks~: Fundamentals and Applications}, Elsevier, Ed.\hskip 1em plus 0.5em
  minus 0.4em\relax Academic Press, Aug. 2011.

\bibitem{Cover:2006:EIT:1146355}
T.~M. Cover and J.~A. Thomas, \emph{Elements of Information Theory}.\hskip 1em
  plus 0.5em minus 0.4em\relax Wiley-Interscience, 2006.

\bibitem{boyd-book-2004}
S.~P. Boyd and L.~Vandenberghe, \emph{Convex optimization}.\hskip 1em plus
  0.5em minus 0.4em\relax Cambridge university press, 2004.

\bibitem{mochaourab-valuetools-2009}
R.~Mochaourab and E.~Jorswieck, ``Resource allocation in protected and shared
  bands: uniqueness and efficiency of nash equilibria,'' in \emph{Fourth
  International ICST Conference on Performance Evaluation Methodologies and
  Tools}.\hskip 1em plus 0.5em minus 0.4em\relax ICST (Institute for Computer
  Sciences, Social-Informatics and Telecommunications Engineering), Oct. 2009,
  p.~68.

\end{thebibliography}

\end{document}